\title{Resolving Irreducible $\mathbb{C}S_n$-Modules by Modules Restricted from $GL_n(\mathbb{C})$}
\author{Christopher Ryba}
\address{Department of Mathematics, Massachusetts Institute of Technology, Cambridge, MA 02139, USA}
\email{ryba@mit.edu}
\begin{document}
\maketitle

\newtheorem{theorem}{Theorem}[section]
\newtheorem{lemma}[theorem]{Lemma}
\newtheorem{proposition}[theorem]{Proposition}
\newtheorem{corollary}[theorem]{Corollary}

\begin{abstract}
We construct a resolution of irreducible complex representations of the symmetric group $S_n$ by restrictions of representations of $GL_n(\mathbb{C})$ (where $S_n$ is the subgroup of permutation matrices). This categorifies a recent result of Assaf and Speyer. Our construction also gives minimal resolutions of simple $\mathcal{F}$-modules (here $\mathcal{F}$ is the category of finite sets).
\end{abstract}

\section{Introduction}
\noindent
The symmetric group $S_n$ may be viewed as the subgroup of the general linear group $GL_n(\mathbb{C})$ consisting of permutation matrices. We may therefore consider the restriction to $S_n$ of irreducible $GL_n(\mathbb{C})$ representations. Let $S^\lambda$ denote the irreducible representation of $\mathbb{C}S_n$ indexed by the partition $\lambda$ (so necessarily $n$ is the size of $\lambda$). Let $\mathbb{S}^\lambda$ denote the Schur functor associated to a partition $\lambda$, so that $\mathbb{S}^\lambda(\mathbb{C}^n)$ is an irreducible representation of $GL_n(\mathbb{C})$, provided that $l(\lambda) \leq n$. Let us write $[M]$ for the image of a module in the Grothendieck ring of $\mathbb{C}S_n$-modules. Thus, the restriction multiplicities $a_{\mu}^{\lambda}$ are defined via
\[
[\mbox{Res}_{S_n}^{GL_n}(\mathbb{S}^\lambda(\mathbb{C}^n)] = \sum_{\mu \vdash n} a_{\mu}^{\lambda}[S^\mu].
\]
Although a positive combinatorial formula for the restriction multiplicities is not currently known, there is an expression using plethysm of symmetric functions (see \cite{macdonald} Chapter 1 Section 8 for background about plethysm). Let us write $s_\lambda$ for the Schur functions (indexed by partitions $\lambda$). The complete symmetric functions, $h_n$, are the Schur functions indexed by the one-part partitions $(n)$. We recall the Schur functions $s_\lambda$ form an orthonormal basis of the ring of symmetric functions with respect to the usual inner product, denoted $\langle -,- \rangle$ (see \cite{macdonald} Chapter 1 Section 4). Let $f[g]$ denote the plethysm of a symmetric function $f$ with another symmetric function $g$. Then,
\[
a_\mu^\lambda =  \langle s_\lambda, s_\mu[1 + h_1 + h_2 + \cdots] \rangle,
\]
see \cite{gay} or Exercise 7.74 of \cite{stanley}. We will need to consider the Lyndon symmetric function,
\[
L_n = \frac{1}{n} \sum_{d\mid n}\mu(d)p_d^{n/d},
\]
where $\mu(d)$ is the M\"{o}bius function and $p_d$ is the $d$-th power-sum symmetric function. It is important for us that $L_n$ is the $GL(V)$ character of the degree $n$ components of the free Lie algebra on $V$ (see the first proof of Theorem 8.1 of \cite{reutenauer}, which proves this to deduce a related result). For convenience we define the total Lyndon symmetric function $L = L_1 + L_2 + \cdots$; this is the character of the (whole) free Lie algebra on $V$.
\newline \newline \noindent
Instead of asking for the restriction coefficients $a_\mu^\lambda$, we may ask the inverse question: how can one express $[S^\mu]$ in terms of $[\mbox{Res}_{S_n}^{GL_n}(\mathbb{S}^\lambda(\mathbb{C}^n))]$? This question was recently answered by Assaf and Speyer in \cite{AS}. For a partition $\mu = (\mu_1, \mu_2, \ldots)$ of any size, let $\mu[n]$ denote $(n-|\mu|, \mu_1, \mu_2, \ldots)$ (a partition of $n$ provided that $n \geq |\mu|+ \mu_1$). Assaf and Speyer showed
\[
[S^{\mu[n]}] = \sum_{\lambda} b_{\lambda}^\mu [\mathbb{S}^\nu(\mathbb{C}^n)],
\]
where
\[
b_\lambda^\mu = (-1)^{|\mu| - |\lambda|}\sum_{\mu / \nu \mbox{ vert. strip}} 
\langle s_{\nu^\prime}, s_{\lambda^\prime}[L] \rangle.
\]
The notation $\mu / \nu \mbox{ vert. strip}$ means that the diagram of $\mu$ may be obtained from the diagram of $\nu$ by adding boxes, no two in the same row, and primes indicate dual partitions.
\newline \newline \noindent
It is more convenient to work with 
\[
M_n^{\mu}  = \mbox{Ind}_{S_{|\mu|} \times S_{n - |\mu|}}^{S_n} (S^\mu \boxtimes \mathbf{1}),
\]
which decompose into the irreducible $S^{\nu[n]}$ via the Pieri rule:
\[
[M_n^{\mu}] = \sum_{\mu / \nu \mbox{ horiz. strip}} [S^{\nu[n]}].
\]
Here, $\mu / \nu \mbox{ horiz. strip}$ means that the diagram of $\mu$ may be obtained from the diagram of $\nu$ by adding boxes, no two in the same column. The formula for $b_\lambda^\mu$ is equivalent to the following statement (see Theorem 3 and Proposition 5 of \cite{AS}):
\[
[M_n^{\mu}] = \sum_{\lambda} (-1)^{|\mu| - |\lambda|} \langle s_{\mu^\prime}, s_{\lambda^\prime}[L] \rangle [\mathbb{S}^\lambda(\mathbb{C}^n)].
\]
The purpose of this note is to give a a categorification of this answer, namely a (minimal) resolution of $M_n^\mu$ by restrictions of $\mathbb{S}^\lambda(\mathbb{C}^n)$; this is accomplished in Theorem \ref{final_resolution}. Along the way, this explains the presence of the character of the free Lie algebra in the formula, and constructs projective resolutions in the category of $\mathcal{F}$-modules (over $\mathbb{Q}$) introduced by Wiltshire-Gordon in \cite{JWG}.

\section*{Acknowledgements}
\noindent
The author would like to thank Gurbir Dhillon for helpful comments on this paper.

\section{The Resolution}
\noindent
We begin by calculating the cohomology of the free Lie algebra on a fixed vector space. Although this result is very well known, it is instrumental in what follows, so we include it for completeness.
\newline \newline \noindent
Let $L$ be the free Lie algebra on $V = \mathbb{C}^m$. Then $\mathfrak{g} = L^{\oplus n} = L \otimes \mathbb{C}^n$ is again a Lie algebra. It has an action of $S_n$ by permuting the $L$ summands, coming from an action of $GL_n(\mathbb{C})$ that does not respect the Lie algebra structure. We consider the Lie algebra cohomology of $\mathfrak{g}$ (with coefficients in the trivial module).
\newline \newline \noindent
Recall that the Lie algebra cohomology is $\mbox{Ext}_{U(\mathfrak{g})}^{i}(\mathbb{C}, \mathbb{C})$. We first consider the case for $t=1$, so $\mathfrak{g} = L$. Now $U(L)$ is just the tensor algebra of $V$, which we denote $T(V)$. We therefore have a (graded) free resolution
\[
0 \xrightarrow{} T(V) \otimes V \xrightarrow{d_1} T(V) \xrightarrow{d_0} \mathbb{C} \xrightarrow{} 0.
\]
Here, $d_1(x \otimes v) = xv$ (product in $T(V)$), while $d_0$ simply projects onto the degree zero component. Crucially, $GL(V)$ acts by automorphisms on $L$ (which was the free Lie algebra on $V$), and the above complex is equivariant for this action. The Lie algebra cohomology is given by the cohomology of the complex
\[
0 \leftarrow \hom_{T(V)}(T(V) \otimes V, \mathbb{C}) \xleftarrow{d_1^*} \hom_{T(V)}(T(V), \mathbb{C})  \leftarrow 0.
\]
We easily see the differential $d_1^*$ is zero because any element of $\hom_{T(V)}(T(V), \mathbb{C})$ is zero on a positive degree element of $T(V)$, but the image of $d_1$ is contained in degrees greater than or equal to $1$. We thus conclude that $H^0(L, \mathbb{C}) = \mathbb{C}$, and $H^1(L, \mathbb{C}) = V^*$, with all higher cohomology vanishing. Next, we obtain the Lie algebra cohomology of $\mathfrak{g} = L^{\oplus n}$.

\begin{proposition} \label{coh_prop}
For $0 \leq i \leq n$:
\[
H^i(\mathfrak{g}, \mathbb{C}) = \mbox{\emph{Ind}}_{S_i \times S_{n-i}}^{S_n}((V^*)^{\otimes i} \otimes \varepsilon_i \boxtimes \mathbb{C}^{\otimes (n-i)}),
\]
where $\varepsilon_i$ is the sign representation of $S_i$, and $\mathbb{C}^{\otimes (n-i)}$ is the trivial representation of $S_{n-i}$. Further, for $i > n$, the cohomology $H^i(\mathfrak{g}, \mathbb{C})$ vanishes.
\end{proposition}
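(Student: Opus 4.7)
The plan is to apply the Künneth formula for Lie algebra cohomology to reduce the computation for $\mathfrak{g} = L^{\oplus n}$ to the single-factor calculation already carried out. Since $U(\mathfrak{g}) = T(V)^{\otimes n}$, I would form the $n$-fold tensor product of the resolution
\[
0 \to T(V) \otimes V \xrightarrow{d_1} T(V) \xrightarrow{d_0} \mathbb{C} \to 0
\]
to obtain a free resolution of $\mathbb{C}$ over $U(\mathfrak{g})$. Applying $\mathrm{Hom}_{U(\mathfrak{g})}(-,\mathbb{C})$, the resulting cochain complex factors as the $n$-fold tensor product of the single-factor $\mathrm{Hom}$-complexes, so its cohomology is $H^*(L, \mathbb{C})^{\otimes n}$.

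By the previous paragraph of the paper, $H^*(L,\mathbb{C})$ is concentrated in degrees $0$ and $1$, with $H^0 = \mathbb{C}$ and $H^1 = V^*$. Hence the degree-$i$ part of $H^*(\mathfrak{g}, \mathbb{C})$ decomposes as a direct sum over subsets $S \subset [n]$ of size $i$, where each $S$ contributes a copy of $(V^*)^{\otimes i}$ placed in the tensor-slots indexed by $S$. Vanishing for $i > n$ is immediate, since no such subsets exist.

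The remaining work is to track the $S_n$-action. Permuting the summands of $\mathfrak{g}$ induces a graded action on the tensor-product resolution permuting the individual complexes, and because $V^*$ lives in cohomological degree $1$, the Koszul sign rule produces a sign each time two odd-degree factors are interchanged. On the summand indexed by $S$, the setwise stabilizer $S_S \times S_{S^c} \cong S_i \times S_{n-i}$ therefore acts on $(V^*)^{\otimes i}$ by the usual permutation action twisted by the sign character of $S_i$, while the full $S_n$ permutes the various summands transitively among subsets of size $i$. Assembling these pieces presents $H^i(\mathfrak{g}, \mathbb{C})$ as $\mathrm{Ind}_{S_i \times S_{n-i}}^{S_n}\bigl((V^*)^{\otimes i} \otimes \varepsilon_i \boxtimes \mathbb{C}^{\otimes(n-i)}\bigr)$.

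The main obstacle I anticipate is the careful bookkeeping of the Koszul signs; the crux is to justify that the induced $S_i$-action on $H^1(L, \mathbb{C})^{\otimes i}$ is the signed rather than the naive permutation, since this is what accounts for the factor of $\varepsilon_i$ in the final answer. Once the sign is pinned down, the identification as an induced module is formal.
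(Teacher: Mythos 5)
Your proof is correct and follows exactly the approach the paper uses: apply the K\"{u}nneth theorem $S_n$-equivariantly to reduce to the $n=1$ case, with the sign representation arising from the Koszul sign rule. The paper's proof is a one-line version of yours; you have simply supplied the explicit bookkeeping (tensor product of resolutions, indexing by size-$i$ subsets, identification of the stabilizer) that the paper leaves to the reader.
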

\begin{proof}
We apply the K\"{u}nneth theorem in an $S_n$-equivariant way. The sign representation $\varepsilon_i$ arises because of the Koszul sign rule (cohomology is only graded commutative).
\end{proof}
\noindent
Now let us compute the Lie algebra cohomology of $\mathfrak{g}$ using the Chevalley-Eilenberg complex \cite{weibel}. Recall that the $i$-th cochain group is
\[
\hom_{\mathbb{C}}(\bigwedge\nolimits^i (\mathfrak{g}), \mathbb{C})
\]
and the differential $d$ is given by the formula
\[
d(f)(x_1, \ldots, x_{k+1}) = \sum_{i<j} (-1)^{j-i}f([x_i, x_j], x_1, \ldots, \hat{x}_i, \ldots, \hat{x}_j, \ldots , x_{k+1})
\]
where hats indicate omitted arguments. 
This differential is $GL(V) \times S_n$-equivariant and homogeneous in terms of the grading on $\mathfrak{g}$ (the grading corresponds to the action of $\mathbb{C}^\times = Z(GL(V))$).
\newline \newline \noindent
Note that $\mathfrak{g}$ is graded in strictly positive degrees. As an algebraic representation of $GL(V)$, the $i$-th cochain group,
\[
\hom_{R}(\bigwedge\nolimits^i (\mathfrak{g}), \mathbb{C})
\]
is contained in degrees $\leq -i$. This means that if we are interested only in the degree $-i$ component of the cohomology, we may truncate the Chevalley-Eilenberg complex after $i$ steps. Thus, if we write a subscript $-i$ to indicate the degree $-i$ component of an $GL(V)$ representation, we obtain the following.
\begin{proposition} \label{res_prop}
The complex (with differential inherited from the Chevalley-Eilenberg complex)
\[
0 \leftarrow\hom_{\mathbb{C}}(\bigwedge\nolimits^i (\mathfrak{g}), \mathbb{C})_{-i} \leftarrow \hom_{\mathbb{C}}(\bigwedge\nolimits^{i-1} (\mathfrak{g}), \mathbb{C})_{-i} \leftarrow \cdots \leftarrow \hom_{\mathbb{C}}(\bigwedge\nolimits^1 (\mathfrak{g}), \mathbb{C})_{-i} \leftarrow \hom_{\mathbb{C}}(\bigwedge\nolimits^0 (\mathfrak{g}), \mathbb{C})_{-i} \leftarrow 0
\]
has cohomology $H^i(\mathfrak{g}, \mathbb{C})$ on the far left, and zero elsewhere.
\end{proposition}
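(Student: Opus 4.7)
The plan is to deduce the proposition directly from Proposition \ref{coh_prop} together with the two grading observations already in place, so no new computation should be needed. The only thing to verify is that truncation is harmless and that the degree-$-i$ subcomplex of the full Chevalley-Eilenberg complex extracts exactly the piece we want.

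First I would note that since $L$ is the free Lie algebra on $V$ with $V$ placed in degree $1$, the Lie algebra $\mathfrak{g} = L^{\oplus n}$ is concentrated in strictly positive degrees. Consequently $\bigwedge^k(\mathfrak{g})$ lies in degrees $\geq k$, and therefore $\hom_{\mathbb{C}}(\bigwedge^k \mathfrak{g}, \mathbb{C})$ lies in degrees $\leq -k$. In particular the degree-$(-i)$ component of $\hom_{\mathbb{C}}(\bigwedge^k \mathfrak{g}, \mathbb{C})$ vanishes for all $k > i$. This justifies that we lose nothing by truncating the Chevalley-Eilenberg complex at position $i$ when we restrict to degree $-i$; at position $i$ the differential out maps into a group that is already zero in degree $-i$, so the kernel at the far-left term is unaffected by the truncation.

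Next I would use the $GL(V)$-equivariance of the Chevalley-Eilenberg differential. Since $\mathbb{C}^\times \subset Z(GL(V))$ acts through the grading, equivariance means the differential preserves the grading. Hence the degree-$(-i)$ components form a subcomplex of the full Chevalley-Eilenberg complex, and its cohomology at position $k$ is precisely the degree-$(-i)$ component of $H^k(\mathfrak{g}, \mathbb{C})$.

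Finally I would invoke Proposition \ref{coh_prop} to read off this degree-$(-i)$ component. The displayed formula for $H^k(\mathfrak{g}, \mathbb{C})$ is induced from $(V^*)^{\otimes k} \otimes \varepsilon_k \boxtimes \mathbb{C}^{\otimes (n-k)}$, and as a $GL(V)$-representation this is a sum of copies of $(V^*)^{\otimes k}$, which is pure of degree $-k$. Therefore the degree-$(-i)$ part of $H^k(\mathfrak{g}, \mathbb{C})$ is zero when $k \neq i$ and equals all of $H^i(\mathfrak{g}, \mathbb{C})$ when $k = i$. Combining this with the truncation step gives the proposition. I do not expect any serious obstacle; the entire argument is bookkeeping around the grading, and the only subtle point is checking that truncation on the right does not alter cohomology at the far-left term, which is handled by the degree bound $\leq -k$ noted above.
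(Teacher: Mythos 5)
Your argument is correct and follows the same route the paper takes implicitly in the paragraph preceding the proposition: bound the degree of the cochain groups by $\leq -k$, observe that the $GL(V)$-equivariant differential preserves the grading so the degree $-i$ pieces form a subcomplex, and conclude by truncation. The one point you make explicit that the paper leaves tacit is the appeal to Proposition~\ref{coh_prop} to see that $H^k(\mathfrak{g}, \mathbb{C})$ is pure of degree $-k$ as a $GL(V)$-representation, so that the degree $-i$ component recovers all of $H^i$ at $k=i$ and nothing at $k<i$ --- a worthwhile clarification, but not a different approach.
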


\section{Resolving the $\mathbb{C}S_n$-modules $M_n^\mu$} \label{sec3}
\noindent
Let us take the multiplicity space of the $GL(V)$-irreducible $\mathbb{S}^{\mu^\prime}(V^*)$.
\begin{proposition} \label{next_prop}
The $\mathbb{S}^{\mu^\prime}(V^*)$ multiplicity space in the cohomology $H^i(\mathfrak{g}, \mathbb{C})$ is 
\[
M_n^{\mu} = \mbox{\emph{Ind}}_{S_i \times S_{n-i}}^{S_n}({S}^\mu \boxtimes \mathbf{1})
\]
\end{proposition}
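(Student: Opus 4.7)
The plan is to apply Proposition \ref{coh_prop} and then extract the $\mathbb{S}^{\mu'}(V^*)$-isotypic component using Schur--Weyl duality. Since the induction is over $S_i \times S_{n-i}$ and involves $(V^*)^{\otimes i}$ tensored with the sign representation of $S_i$, the $GL(V)$-action is concentrated in the first tensor factor of the inducing representation, so taking the $GL(V)$-multiplicity space of a fixed $GL(V)$-irreducible commutes with the induction over $S_n$.

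First, I would apply Proposition \ref{coh_prop} to write
\[
H^i(\mathfrak{g}, \mathbb{C}) = \mbox{Ind}_{S_i \times S_{n-i}}^{S_n}\bigl((V^*)^{\otimes i} \otimes \varepsilon_i \boxtimes \mathbf{1}\bigr),
\]
noting that the trivial $S_{n-i}$-factor contributes no $GL(V)$-structure. Because induction from $S_i \times S_{n-i}$ to $S_n$ is $GL(V)$-linear (the $GL(V)$-action only affects the $(V^*)^{\otimes i}$ tensor factor), the $\mathbb{S}^{\mu'}(V^*)$-multiplicity space of $H^i(\mathfrak{g}, \mathbb{C})$ is
\[
\mbox{Ind}_{S_i \times S_{n-i}}^{S_n}\bigl(\mathrm{Hom}_{GL(V)}(\mathbb{S}^{\mu'}(V^*), (V^*)^{\otimes i} \otimes \varepsilon_i) \boxtimes \mathbf{1}\bigr).
\]

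Next, I would invoke Schur--Weyl duality, which gives the $GL(V) \times S_i$-decomposition
\[
(V^*)^{\otimes i} = \bigoplus_{\lambda \vdash i} \mathbb{S}^\lambda(V^*) \boxtimes S^\lambda.
\]
Twisting by $\varepsilon_i$ uses the standard fact $S^\lambda \otimes \varepsilon_i \cong S^{\lambda'}$, so
\[
(V^*)^{\otimes i} \otimes \varepsilon_i \cong \bigoplus_{\lambda \vdash i} \mathbb{S}^\lambda(V^*) \boxtimes S^{\lambda'}.
\]
Setting $\lambda = \mu'$ (which forces $|\mu| = i$), the $\mathbb{S}^{\mu'}(V^*)$-multiplicity space as an $S_i$-module is $S^{(\mu')'} = S^\mu$, and all other $\lambda$ contribute zero multiplicity.

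Substituting back, the $\mathbb{S}^{\mu'}(V^*)$-multiplicity space of $H^i(\mathfrak{g}, \mathbb{C})$ is
\[
\mbox{Ind}_{S_i \times S_{n-i}}^{S_n}(S^\mu \boxtimes \mathbf{1}) = M_n^\mu,
\]
as required. This is a short proof with no real obstacle; the only subtlety worth emphasizing is the transposition that occurs when tensoring with $\varepsilon_i$, which is precisely why the Schur functor indexed by $\mu'$ (rather than $\mu$) appears in the final answer.
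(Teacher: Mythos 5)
Your proof is correct and takes essentially the same route as the paper: apply Proposition \ref{coh_prop}, decompose $(V^*)^{\otimes i} \otimes \varepsilon_i$ via Schur--Weyl duality together with the twist $S^\lambda \otimes \varepsilon_i \cong S^{\lambda'}$, and then read off the $\mathbb{S}^{\mu'}(V^*)$-multiplicity space inside the induction. The extra step you include, explaining why taking the $GL(V)$-multiplicity space commutes with the induction functor (since the $GL(V)$-action lives only on the $(V^*)^{\otimes i}$ factor), is left implicit in the paper's one-line proof, but it is the right justification and does not constitute a different method.
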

\begin{proof}
We apply Schur-Weyl duality to Proposition \ref{coh_prop}, noting that $S^\lambda \otimes \varepsilon_i = S^{\lambda^\prime}$:
\[
H^i(\mathfrak{g}, \mathbb{C}) = \mbox{Ind}_{S_i \times S_{n-i}}^{S_n}((V^*)^{\otimes i} \otimes \varepsilon_i \boxtimes \mathbf{1}) = \mbox{Ind}_{S_i \times S_{n-i}}^{S_n}(\bigoplus_{\lambda \vdash i}\mathbb{S}^{\lambda}(V^*)\otimes S^{\lambda^\prime} \boxtimes \mathbf{1}).
\]
Hence, the $\mathbb{S}^{\mu^\prime}(V^*)$ multiplicity space is $\mbox{Ind}_{S_i \times S_{n-i}}^{S_n}({S}^\mu \boxtimes \mathbf{1})$.
\end{proof}
\noindent
Because the complex we constructed in Proposition \ref{res_prop} is $GL(V)$ equivariant, taking cohomology commutes with taking the $\mathbb{S}^{\mu^\prime}(V^*)$ multiplicity space. We immediately obtain the following.
\begin{theorem} \label{final_resolution}
Consider the complex of $S_n$ representations
\[
\hom_{GL(V)}\left(\mathbb{S}^{\mu^\prime}(V^*), \hom_{\mathbb{C}}(\bigwedge\nolimits^i (\mathfrak{g}), \mathbb{C})\right)
\]
for $|\mu| \geq i \geq 0$ with maps induced by the differential of the Chevalley-Eilenberg complex. This is a resolution of $M_n^\mu$ by representations restricted from $GL_n(\mathbb{C})$.
\end{theorem}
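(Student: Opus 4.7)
The plan is to apply the functor $\hom_{GL(V)}(\mathbb{S}^{\mu^\prime}(V^*), -)$ to the complex of Proposition \ref{res_prop} with $i = |\mu|$ and then invoke Proposition \ref{next_prop} to identify the cohomology. Since $\mathbb{S}^{\mu^\prime}(V^*)$ has $\mathbb{C}^\times = Z(GL(V))$-weight $-|\mu|$, any $GL(V)$-equivariant map from $\mathbb{S}^{\mu^\prime}(V^*)$ into a graded $GL(V)$-module automatically lands in the degree $-|\mu|$ component, so each term written in the theorem equals $\hom_{GL(V)}(\mathbb{S}^{\mu^\prime}(V^*), \hom_\mathbb{C}(\bigwedge\nolimits^i(\mathfrak{g}), \mathbb{C})_{-|\mu|})$ and matches precisely the truncated Chevalley-Eilenberg complex of Proposition \ref{res_prop}.

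Two facts then combine to give the theorem. First, the complex of Proposition \ref{res_prop} is $GL(V)$-equivariant, since $GL(V)$ acts on $L$ (hence on $\mathfrak{g}$) by Lie algebra automorphisms and the Chevalley-Eilenberg differential is built from the bracket. In characteristic zero, algebraic $GL(V)$-modules are semisimple, so $\hom_{GL(V)}(\mathbb{S}^{\mu^\prime}(V^*), -)$ is exact and commutes with passage to cohomology; together with Proposition \ref{next_prop}, this shows that the resulting complex of $S_n$-modules has cohomology $M_n^\mu$ at the far left and vanishes elsewhere, yielding the resolution.

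Second, each cochain group $\hom_\mathbb{C}(\bigwedge\nolimits^i(\mathfrak{g}), \mathbb{C})$ carries a commuting $GL_n(\mathbb{C})$-action coming from the defining representation on the factor $\mathbb{C}^n$ in $\mathfrak{g} = L \otimes \mathbb{C}^n$, so applying the $GL(V)$-equivariant Hom functor preserves this structure and produces a genuine $GL_n(\mathbb{C})$-module, whose restriction to $S_n$ gives the desired term of the resolution. The only point worth highlighting, rather than being a genuine obstacle, is that the Chevalley-Eilenberg differentials are not $GL_n(\mathbb{C})$-equivariant: the bracket on $\mathfrak{g} = L^{\oplus n}$ is componentwise and therefore preserved only by $S_n \subset GL_n(\mathbb{C})$. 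This is compatible with the theorem, however, which asserts only that the \emph{terms} of the resolution be restrictions from $GL_n(\mathbb{C})$, not that the differentials are.
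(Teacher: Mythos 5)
Your proof is correct and matches the paper's approach: the paper likewise combines Propositions \ref{res_prop} and \ref{next_prop}, using $GL(V)$-equivariance of the truncated Chevalley--Eilenberg complex to commute the $\mathbb{S}^{\mu^\prime}(V^*)$-multiplicity-space functor with passage to cohomology. Your additional remarks --- that semisimplicity of algebraic $GL(V)$-modules in characteristic zero is what makes that functor exact, that central weight considerations automatically isolate the degree $-|\mu|$ component, and that the cochain groups (but not the differentials) are $GL_n(\mathbb{C})$-equivariant --- are details the paper's terse ``immediate'' proof leaves implicit, and are all correct.
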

\begin{proof}
This is immediate from Proposition \ref{next_prop} and Proposition \ref{res_prop}. 
\end{proof}
\noindent
Should we wish to resolve the irreducible $S^\mu$, rather than $M_n^{\mu}$, we simply take $n = |\mu|$ so that $M_n^\mu = S^\mu$.
\newline \newline \noindent
We now take the Euler characteristic of our complex, viewed as an element of the Grothendieck ring of $\mathbb{C}S_n$-modules tensored with the Grothendeick ring of $GL(V)$-modules; we view the latter as the ring of symmetric functions. In the language of symmetric functions, the Schur function $s_\lambda$ corresponds to the irreducible representation $\mathbb{S}^\lambda(V)$ (strictly speaking, we must quotient out $s_\lambda$ for $\lambda$ with more parts than $m = \dim(V)$, but this will never be an issue). We express the cohomology groups in terms of symmetric functions; as in the proof of Proposition \ref{next_prop}, Schur-Weyl duality gives
\[
H^i(\mathfrak{g}, \mathbb{C}) = \mbox{Ind}_{S_i \times S_{n-i}}^{S_n}(\bigoplus_{\lambda \vdash i}\mathbb{S}^{\lambda}(V^*)\otimes S^{\lambda^\prime} \boxtimes \mathbf{1}).
\]
Letting $\lambda = \mu^\prime$ and passing to Grothendieck rings, this becomes $\sum_{\mu \vdash i} s_{\mu^\prime}(x^{-1}) [M_n^\mu]$, where a Schur function indicates a representation of $GL(V)$ (the inverted variables account for the dualised space $V^*$). 
Calculating the Euler characteristic directly from the cochain groups, we consider the $i$-th exterior power of $\mathfrak{g} = L \otimes \mathbb{C}^n$,
\begin{equation} \label{chain_form}
\bigwedge\nolimits^i (\mathfrak{g}) = \bigoplus_{\lambda \vdash i} \mathbb{S}^{\lambda^\prime}(L) \otimes \mathbb{S}^\lambda(\mathbb{C}^n)
\end{equation}
which gives $\sum_{\lambda \vdash i} [S^\lambda(\mathbb{C}^n)] s_{\lambda^\prime}[L](x)$. The actual chain groups are the duals of these exterior powers, so we replace the symmetric function variables $x$ with their inverses $x^{-1}$. When we introduce a factor of $(-1)^{|\lambda|-|\mu|}$ from the signs in the Euler characteristic, we obtain
\[
\sum_{\lambda} (-1)^{|\lambda|-|\mu|}  [\mathbb{S}^\lambda(\mathbb{C}^n)] s_{\lambda^\prime}[L](x^{-1}).
\]
Thus the coefficient of $[\mathbb{S}^\lambda(\mathbb{C}^n)]$ in $[M_{\mu}^n]$ is the coefficient of $s_{\mu^\prime}(x^{-1})$ in $(-1)^{|\mu|-|\lambda|}s_{\lambda^\prime}[L](x^{-1})$, which gives us
\[
[M_\mu^n] = \sum_{\lambda} (-1)^{|\mu|-|\lambda|} [\mathbb{S}^\lambda(\mathbb{C}^n)] \langle s_{\lambda^\prime}[L], s_{\mu^\prime}\rangle .
\]
This provides an alternative proof the formula from \cite{AS} for expressing the irreducible representation $S^{\mu[n]}$ of $S_n$ in terms of restrictions $\mbox{Res}_{S_n}^{GL_n(\mathbb{C})}(\mathbb{S}^{\lambda}(\mathbb{C}^{n}))$. This construction addresses a remark of Assaf and Speyer by explaining the presence of the character of the free Lie algebra (namely, $L$) in the formula.
\section{Application to $\mathcal{F}$-modules}
\noindent
Let $\mathcal{F}$ denote the category of finite sets. An $\mathcal{F}$-module is a functor from $\mathcal{F}$ to vector spaces over a fixed field. These were introduced in \cite{JWG}, and their homological algebra was studied over $\mathbb{Q}$.
An $\mathcal{F}$-module consists of a $S_n$-module for each $n$ together with suitably compatible maps between them. (This is because the image of an $n$-element set carries an action of $Aut(\{1,2,\ldots,n\}) = S_n$.) When $\mu$ is a partition different from $(1^k)$ (i.e. not a single column), $M_n^\mu$ (considered for fixed $\mu$ but varying $n$) defines an irreducible $\mathcal{F}$-module, by demanding that an $n$-element set in $\mathcal{F}$ map to $M_n^\mu$ (see Theorem 5.5 of \cite{JWG}). Furthermore, in this category, objects obtained by restricting $\mathbb{S}^{\lambda}(\mathbb{Q}^n)$ to $S_n$ are projective (see Definition 4.8 and Proposition 4.12 of \cite{JWG}). Our resolution (provided we replaces all instances of $\mathbb{C}$ with $\mathbb{Q}$) therefore gives a projective resolution of these simple $\mathcal{F}$-modules $M_n^\mu$. This resolution is in fact minimal (in the sense that each step in the projective resolution is as small as possible). This follows from the following two facts. Firstly, the $r$-th term in the resolution of $M_n^\mu$ is a sum of $\mbox{Res}_{S_n}^{GL_n(\mathbb{Q})}(\mathbb{S}^{\lambda}(\mathbb{Q}^n))$ with $|\lambda| = |\mu|-r$, which is a consequence of Equation \ref{chain_form}. In particular, such a module with fixed $\lambda$ can only appear in one step of the resolution. Secondly, a theorem of Littlewood (Theorem XI of \cite{lw}), states that the restriction multiplicity $a_{\mu}^\lambda$ is equal to $\delta_{\mu, \lambda}$ if $|\mu| \geq |\lambda|$. Thus, $[\mbox{Res}_{S_n}^{GL_n(\mathbb{Q})}(\mathbb{S}^{\lambda}(\mathbb{C}^n)]$ are linearly independent elements of the Grothendieck ring of $S_n$-modules, provided $n$ is sufficiently large. Furthermore, the $[\mbox{Res}_{S_n}^{GL_n(\mathbb{Q})}(\mathbb{S}^{\lambda}(\mathbb{C}^n)]$ should only occur in the resolution in order of decreasing $|\lambda|$ (as in our resolution). Together with Observation 4.25 of \cite{JWG}, which provides a projective resolution of certain $\mathcal{F}$-modules $D_k$ (which can be thought of as substitutes for $M_n^\mu$ when $\mu = (1^k)$), we obtain minimal projective resolutions of all finitely-generated $\mathcal{F}$-modules over $\mathbb{Q}$.

\bibliographystyle{alpha}
\bibliography{letter.bib}

\end{document}